\title{Lattice Conditional Independence Models and Hibi Ideals}
\author{
Peter Caines, Fatemeh Mohammadi, Eduardo Sáenz-de-Cabezón, and Henry Wynn
 }
\def\frk{\mathfrak}               
\def\Phi{{\frk n}}
\def\Phi{{\frk N}}
\def\A{{\mathcal A}}
\def\ji{J({\mathcal L})}
\def\ml{{\mathcal L}}
\def\opn#1#2{\def#1{\operatorname{#2}}} 
\opn\chara{char} \opn\length{\ell} \opn\pd{pd} \opn\rk{rk}
\opn\projdim{proj\,dim} \opn\injdim{inj\,dim} \opn\rank{rank}
\opn\depth{depth} \opn\grade{grade} \opn\height{height}
\opn\embdim{emb\,dim} \opn\codim{codim}
\opn\Tr{Tr} \opn\bigrank{big\,rank}
\opn\superheight{superheight}\opn\lcm{lcm}
\opn\trdeg{tr\,deg}
\opn\reg{reg} \opn\lreg{lreg} \opn\ini{in} \opn\lpd{lpd}
\opn\size{size} \opn\sdepth{sdepth}
\opn\link{link}\opn\fdepth{fdepth}\opn\lex{lex}
\opn\LM{LM}
\opn\LC{LC}
\opn\NF{NF}
\opn\Merge{Merge}
\opn\sgn{sgn}
\opn\div{div} \opn\Div{Div} \opn\cl{cl} \opn\Pic{Pic}
\opn\Prin{Prin}
\opn\op{op}
\opn\indeg{indeg} \opn\outdeg{outdeg}
\opn\red{red}
\opn\Spec{Spec} \opn\Supp{Supp} \opn\supp{supp} \opn\Sing{Sing}
\opn\Ass{Ass} \opn\Min{Min}\opn\Mon{Mon} \opn\val{val}
\opn\Ann{Ann} \opn\Rad{Rad} \opn\Soc{Soc}
 \opn\Ker{Ker} \opn\Coker{Coker} \opn\Am{Am}
\opn\Hom{Hom} \opn\Tor{Tor} \opn\Ext{Ext} \opn\End{End}
\opn\Aut{Aut} \opn\id{id}
\opn\nat{nat}
\opn\pff{pf}
\opn\Pf{Pf} \opn\GL{GL} \opn\SL{SL} \opn\mod{mod} \opn\ord{ord}
\opn\Gin{Gin} \opn\Hilb{Hilb}\opn\sort{sort}
\opn\span{span}
\opn\Image{Image}
\opn\aff{aff} \opn\con{conv} \opn\relint{relint} \opn\st{st}
\opn\lk{lk} \opn\cn{cn} \opn\core{core} \opn\vol{vol}
\opn\link{link} \opn\star{star}\opn\lex{lex}\opn\set{set}
\opn\dist{dist}
\opn\gr{gr}
\def\pot#1#2{#1[\kern-0.28ex[#2]\kern-0.28ex]}
\opn\dirlim{\underrightarrow{\lim}}
\opn\inivlim{\underleftarrow{\lim}}
\def\Implies{\ifmmode\Longrightarrow \else
        \unskip${}\Longrightarrow{}$\ignorespaces\fi}
\def\implies{\ifmmode\Rightarrow \else
        \unskip${}\Rightarrow{}$\ignorespaces\fi}
\def\iff{\ifmmode\Longleftrightarrow \else
        \unskip${}\Longleftrightarrow{}$\ignorespaces\fi}
\definecolor{caribbeangreen}{rgb}{0.0, 0.8, 0.6}
\definecolor{capri}{rgb}{0.0, 0.75, 1.0}
\newtheorem{Theorem}{Theorem}[section]
\newtheorem{Lemma}[Theorem]{Lemma}
\newtheorem{Corollary}[Theorem]{Corollary}
\theoremstyle{definition}
\newtheorem{Example}[Theorem]{Example}
\newtheorem{Remark}[Theorem]{Remark}
\newtheorem{Definition}[Theorem]{Definition}
\let\kappa=\varkappa
\def\qed{\ifhmode\textqed\fi
      \ifmmode\ifinner\quad\qedsymbol\else\dispqed\fi\fi}
\def\textqed{\unskip\nobreak\penalty50
       \hskip2em\hbox{}\nobreak\hfil\qedsymbol
       \parfillskip=0pt \finalhyphendemerits=0}
\def\dispqed{\rlap{\qquad\qedsymbol}}
\opn\dis{dis}
\def\pnt{{\raise0.5mm\hbox{\large\bf.}}}
\opn\Lex{Lex}
\opn\syz{{\rm syz}}
\opn\spoly{{\rm spoly}}
\opn\LM{{\rm LM}}
\opn\lm{{\rm lm}}
\opn\lcm{{\rm lcm}} \opn\A{\mathcal A}
\numberwithin{equation}{section}
\newcommand{\bigCI}{\mathrel{\text{\scalebox{1.07}{$\perp\mkern-10mu\perp$}}}}
\begin{document}

\maketitle


\footnotetext{\emph{2020 Mathematics Subject Classification:}{62R01, 05E40, 13P25, 06D50}}
\footnotetext{\emph{Key words:} {Lattice conditional independence models; partial ordered sets; Hibi ideals; Transitive directed acyclic graphs; Edge ideals of graphs; Alexander duality}}     

\tikzstyle{Cgray}=[scale = .45,circle, fill = gray, minimum size=3mm] \tikzstyle{Cblack}=[scale = .7,circle, fill = black, minimum size=3mm]
\tikzstyle{Cblue}=[scale = .5,circle, fill = blue, inner sep = 0pt, minimum
size=3mm]
\tikzstyle{C1}=[scale = .7,circle, fill = black!0, inner sep = 0pt, minimum
size=3mm]

\tikzstyle{test2}=[scale = 1.5,circle, fill = black!0, inner sep = 0pt, minimum
size=3mm]
\tikzstyle{Cwhite}=[scale = .45,circle, fill = white, minimum size=3mm] 
\tikzstyle{Cblack2}=[scale = .3,circle, fill = black, minimum size=3mm] 
\tikzstyle{Cblack}=[scale = .3,circle, fill = black, minimum size=3mm]
\tikzstyle{C0}=[scale = .9,circle, fill = black!0, inner sep = 0pt, minimum size=3mm]
\tikzstyle{C1}=[scale = .7,circle, fill = black!0, inner sep = 0pt, minimum size=3mm]
\tikzstyle{Cred}=[scale = .4,circle, fill = red, minimum size=3mm] 
\tikzstyle{Cblack3}=[scale = .4,circle, fill = black, minimum size=3mm]

\maketitle

\noindent{\bf Abstract.}
Lattice Conditional Independence models \cite{andersson1993lattice} 
are a class of models developed first for the Gaussian
case in which a distributive lattice classifies all the conditional independence statements. The main result is that these models can equivalently be described via a transitive directed acyclic graph (TDAG) in which, as is normal for causal models, the conditional independence is in terms of conditioning on ancestors in the graph. We demonstrate that a parallel stream of research in algebra, the theory of Hibi ideals, not only maps directly to the LCI models but gives a vehicle to generalise the theory from the linear Gaussian case. Given a distributive lattice (i) each conditional independence statement is associated with a Hibi relation defined on the lattice, (ii) the directed graph is given by chains in the lattice which correspond to chains of conditional independence, (iii) the elimination ideal of product terms in the chains gives the Hibi ideal and (iv) the TDAG can be recovered from a special bipartite graph constructed via the Alexander dual of the Hibi ideal. It is briefly demonstrated that there are natural applications to statistical log-linear models, time series, and Shannon information flow.

\section{Introduction}
Lattice Conditional Independence (LCI) are a special type of statistical graphical models introduced by Andersson and Perlman
\cite{andersson1993lattice} in the context of linear Gaussian models. The idea is that in the case of a distributive lattice of linear subspaces associated with the marginal models, all Conditional Independence (CI) statements could be classified via the intersections (meets) on the lattice. The theory comes with an important duality which links the LCI models isomorphically to a Transitive Directed Acyclic Graph (TDAG) from \cite{andersson1995relation}, so that LCI could be seen as providing an important class of causal models. More precisely, each intersection on the lattice is mapped to a conditional independence statement conditioning on common ancestors in the TDAG.

The LCI models that arise from a TDAG are not in general graphical models based on directed acyclic graphs (DAGs) or undirected graphs (UGs). An important subclass of a graphical model is a decomposable graphical model, see~\cite{lauritzen1996graphical}, and it is possible to confuse these with the LCI models. There is an overlap between the two classes, see~\cite{andersson1995relation, castelo2003characterization}. An added complexity is that some DAG models give the same set of conditional independences as UGs; The DAGs equivalent to the same model are called {\em Markov equivalent}.  The TDAG models have a unique set of conditional independence statements but are not in general represented by UGs.

In this paper, we study LCI models through the lens of algebraic statistics, by connecting them to so-called Hibi rings that are both predicated on distributive (Boolean) lattices.
 Hibi rings and their properties have been extensively studied in commutative algebra, see e.g.~\cite{Hibi1, Herzog, ene2011monomial}. The key point linking the two areas is the fact that for LCI models every member of the lattice is associated with a marginal distribution. This means that, initially, the algebraic relations are associated with margins. The existence of a connection between LCI models and Hibi ideals was suggested by Beerenwinkel, Eriksson and Sturmfels \cite{beerenwinkel2007conjunctive} within the context of Gr\"obner basis theory and maximum likelihood estimation. Here, we develop the algebraic connection in more detail, particularly in the relationship to the duality with the TDAG representation. The insight is that the TDAG representation that is related to conditional probability statements, suggests that there should be a representation of this duality in the algebraic context. This is indeed the case and, remarkably, the relation is that the Hibi (marginal) representation is the toric elimination ideal from the products of conditional probabilities given by the TDAG. Moreover, the TDAG itself can be read from a special bipartite graph arising from a particular Alexander duality.  

In \cite{caines2007algebraic} it was suggested that LCI models have wide applicability, and in that paper, the application to time series was discussed. The work by \cite{jozsa2019causality} is a continuation of this approach in which a more formal link to Granger causality has been made. This, in particular, prompts the inclusion of time series as one of the main applications. Another application is to Shannon information where the LCI models lead to additive models for information.

The outline of the paper is as follows. 
In \S\ref{Sec:Elem}, we will give an elementary review of the LCI models with their connections to other types of statistical models, and in \S\ref{Sec:LCI}, we discuss the linear (Gaussian) case. In Section~\ref{Sec:Hibi}, we will describe the Hibi ideals. Section~\ref{Sec:TDAG} will link all the concepts together. This includes an understanding of the effect of reversing the arrows in the TDAG. The final sections cover the two applications: time series and Shannon information.

\subsection{Elementary considerations}\label{Sec:Elem}
In teaching elementary statistics and probability theory, there is something of a conundrum in the expression of independence and conditional independence. This is seen in as simple an example as two independent binary random variables $X_1,X_2 = \{0,1\}$. 
Let 
$$p_{ij} = \mbox{prob} \{(X_1 = i) \cap (X_2 = j)\}\;(i,j=0,1). $$
The issue is that there are two ways of expressing independence. Writing the margins as
$$ p_{i +} = \sum_{j=0,1} p_{ij} \; \text{ and } \; p_{+ j} = \sum_{i=0,1} p_{ij},$$
the first way of expressing independence is 
$$p_{ij} = p_{i +} p_{ + j}\; \; (i,j = 0,1).$$
The second, and equivalent, condition (given all the $p_{ij} > 0$) is  
$$p_{00}p_{11} - p_{10} p_{01}=0.$$
In the algebraic context, this can be seen as a toric ideal condition in a certain polynomial ring, which is at the foundation of algebraic statistics for log-linear models. That is to say
it arises as the elimination ideal from the power product representation
$$p_{ij} = s_i t_j,$$
or its log-linear equivalent
$$ \log p_{ij} = \theta_i + \phi_j.$$
where $s_i = \exp(\theta_i)$ and  $t_j = \exp(\phi_j).$

\smallskip

With three binary random variables $X_1,X_2,X_3$, for the conditional independence statement
$$ X_1  \bigCI X_2 | X_3,$$
we have either the marginal version
\begin{equation}
p_{ijk} p_{++ k}  = p_{i + k} p_{+ j k}\;\; (i,j,k = 0,1) \label{margin},
\end{equation}
or the toric ideal with one generator for each level of $X_3$
\begin{eqnarray*}
p_{000}p_{110} - p_{010} p_{100} & = & 0 \\
p_{001}p_{111} - p_{011} p_{101} & = & 0. 
\end{eqnarray*}
The key idea in this paper is that the version \eqref{margin} relies on a special set of margins indexed by subsets of indices $\{ijk,jk,ik,k\}$, which form a lattice generated by the $2$-subsets $\{jk,ik\}$ 
under unions and intersections. 
We note, immediately, that  there is a version of \eqref{margin}  for discrete random variables or continuous random variables and for larger collections of index sets. Let $[n] = \{1, \ldots, n\}$. For any subset of indices $ I \subset [n]$, then, with slight abuse of notation we write $p_I$ for the $I$-margin (summing or integrating over variables in $[n] \setminus I$).

\medskip

Thus, let $I, J \subset [n]$. Then the conditional independence statement
\begin{equation}
X_{I \setminus J} \bigCI X_{J \setminus I} | X_{I \cap J},
 \label{CI}
\end{equation}is represented by
\begin{equation}
p_{I \cup J} p_{I \cap J} = p_I p_J\quad\text{(given that $p_I>0$ for all $I$).} \label{P_I}
\end{equation}
This is somewhat lazy notation: we should write this strictly as:
\begin{equation}
p_{I \cup J}(x_{I \cup J}) p_{I \cap J}(x_{I \cap J}) = p_I (x_I) p_J(x_J) \label{sets}, 
\end{equation}
for all $ x = (x_1, \ldots, x_n)$ in the support of the distribution.

This shows the relationship with the sublattice generated by $I$ and $J$ in which the join operator is the union of sets $\cup$, and the meet operator is the intersection of sets $\cap$.
More precisely, 
a term like $p_I$ is used both as terms in lattice theory and a (possibly marginal) probability distribution, in the continuous case a probability density function. We see clearly also how from an abstract point of view, 
\eqref{P_I} is a toric term indexed by the lattice. 


One aim of the present paper is to show that if we have a Boolean (distributive) lattice formed from collection of sets $\mathbb I$, then there is a probability model associated with $\mathbb I$ such that {\em every} union/intersection formula of the type \eqref{P_I} gives a conditional independence statement of the type \eqref{sets}.  Moreover, we demonstrate the connection with the associated toric ideal of the lattice.


The toric ideal associated with a distributive lattice is called a Hibi ideal after the work of Hibi in \cite{Hibi1}.  Moreover, it does indeed hold that this defines a class of probability models which is legitimate to call Lattice Conditional Independence (LCI)
because exactly such models were introduced in the Gaussian (normal) case. 
The advantages of the algebraic approach using Hibi ideals are that, (i) we can define LCI models for any type of random variables, discrete or continuous, (ii) entities such as the  Transitive Directed Acyclic Graph (TDAG) associated with the LCI model have a purely algebraic realisation and (iii) the ideal theory can be used  to make statements about the probability models. One algebraic concept is that of the  Alexander duality, which we will show has a precise relationship to the underlying TDAG.

\begin{Definition}[TDAG]\label{def:TDAG}
A directed acyclic graph (DAG) $G=([n],E)$ is transitive (TDAG) if $\{i\rightarrow j\}\in E$ and $\{j\rightarrow k\}\in E$ imply that $\{i\rightarrow k\}\in E$ (see Figures~\ref{fig:TDAG}  and~\ref{fig:time}).
\end{Definition}

\subsection{Linear LCI models}\label{Sec:LCI}
Multivariate normal LCI models are defined in terms of collections of marginal distributions based on a distributive lattice of subsets of indices, as described above and will be made more formal later in the paper. The meet and join of the lattice will give intersections and unions of vector spaces associated with the margins.

Let us now discuss a single conditional independence statement as in \eqref{CI} for a (zero mean) multivariate normal random vector $X= (X_1, \ldots, X_n)^T$. Let the full rank covariance matrix be $\Gamma$. 
First, we factor $\Gamma$ in some standard way: $\Gamma = A A^T$, where $A$ is a square matrix.  We can then think of $X$ as arising from what is sometimes called a moving average representation
$$X = A Z,$$
where $Z$ is the zero mean multivariate normal random variable with the covariance 
matrix $I_n$ (the $n \times n$ identity matrix). For index  sets $I$, $J$ and $K = I \cap J$ we can express, by selecting appropriate rows of $A$, 
the corresponding random vector as 
$$X_I =  A_I Z,\; X_J = A_J Z,\; X_K = A_K Z,$$
with covariance matrices, respectively:
$$\Gamma_I = A_I A_I^T,\; \Gamma_J = A_J A_J^T,\; \Gamma_K = A_K A_K^T.$$
(Note that $A_I$ denotes the submatrix of $A$ on the rows indexed by $I$). Using a standard formula known as Schur complement, the covariance matrix between $X_I$ and $X_J$ conditional on $X_K$ is
\begin{equation}
A_I A_J^T - A_I A_K^T (A_K A_K^T)^{-1}A_K A_J^T. \label{cond}
\end{equation}
For conditional independence in the Gaussian case, it is necessary and sufficient that the matrix at \eqref{cond}  is zero.

Notice that $P_K = A_K^T (A_K A_K^T)^{-1}A_K$ is the orthogonal projection operator onto the row space of $A_K$.
Similarly, we can construct the orthogonal projections $P_I = A_I^T (A_I A_I^T)^{-1}A_I$ and $P_J = A_J^T (A_J A_J^T)^{-1}A_J$ for the row spaces of $A_I$ and $A_J$, respectively. The conditional independence condition can then be seen to be equivalent to
$$P_I (I_n -P_K) P_J = 0.$$
It is important to note that the  condition is equivalent to the commutativity condition
$$P_I P_J = P_J P_I\;\; ( = P_K) ,$$
which is necessary and sufficient for the conditional independence.

If we extend the argument to all $X_I, I \in \mathbb I$, where $\mathbb I$ is a distributive sublattice, then we see that it is necessary and sufficient that all projections $P_I,$ for all $I \in \mathbb I$ in the lattice commute. This implies that they are simultaneously diagonalisable with spectrum (eigenvalues) being 0 or 1.  In fact the
location of the values 1 in $[n]$ in the spectrum of each projector $I,J,K \in [n]$ corresponds precisely to the indicator of the corresponding index set:  
$P_I = \mbox{diag}(q_1, \dots, q_n)$ where $q_i = 1$ for $i \in I $ and 0 otherwise. For example,  for the sets $I = 13, J = 23$, and $K = 3$ we have the eigenvalues (spectrum)
$\{1, 0, 1\}  \{0,1,1\}$ and $\{0,0,1\}$.
The sets where the spectrum is non-zero
are a model for a subset lattice which is equivalent to the original lattice.   

Important in the theory of LCI is that every distributive lattice, which we here associate with the collection of index sets $\mathbb I$, arises from a TDAG and vice versa:  any distributive lattice defines a TDAG. Although we will study this in more detail in \S\ref{sec:TDAG_L} (see also Definition~\ref{def:TDA_LG}), we can simply describe this 1-1 correspondence as follows. The corresponding TDAG, $G([n],E)$ has $n$ vertices labelled by $1,\ldots,n$, where each $i$ corresponds to an $X_i, \;i=1,\ldots, n$ in our probability model, and $i \rightarrow j$ is in the edge set $E$ if and only if $i$ is in the ancestral set of $j$, meaning:  $i \leq j$ in the partial ordering defined by the lattice. 
  
\section{Hibi ideals and LCI models}\label{Sec:Hibi}
In \cite{Hibi1}, Hibi introduced a class of algebras which nowadays  are called Hibi rings. They are  toric rings attached to finite posets, and may be viewed as natural generalizations of polynomial rings. We first recall the construction of Hibi ideals associated to distributive lattices. Recall that a poset in which every two elements have a ``meet" and a ``join" is called a lattice.
A lattice 
$(\mathcal{L},\vee,\wedge)$ is {\em distributive} if the following additional identities hold for all $x, y$, and $z$ in $\mathcal{L}$:
\[
x\wedge (y\vee z)=(x\wedge y)\vee (x\wedge z)\text{ and } x\wedge (y\vee z)=(x\vee y)\wedge (x\vee z).
\]
Given a poset $Q$, a non-empty subset $I\subseteq Q$ is called an {\it order ideal} of $Q$ if for any $i\in I$ and $j\in Q$, $j<i$ implies that $j\in I$.
We denote $\mathcal{O}(Q)$ for the set of order ideals of $Q$.
Note that  
if we define the union of sets $\cup$ as the join operator and the intersection of sets $\cap$ as the meet operator, then $(\mathcal{O}(Q),\cap,\cup)$ is naturally a distributive lattice.

Before defining Hibi ideals, we first fix our notation. Consider the poset $Q$ on the elements $1,2,\ldots,n$ and the polynomial ring $R=\mathbb{C}[z_1,\ldots,z_n, y_1,\ldots,y_n]$ over the field of complex numbers on the variables $z_i,y_i$, where variables $z_i,y_i$ are corresponding to the element $i$ of $Q$. For every order ideal $I\subseteq Q$ we associate the monomials $u_I:=\prod_{i\in {I}}z_i\prod_{i\not\in {I}}y_i$ and $u'_I:=\prod_{i\in {I}}z_i$. 
Then we define the following polynomial maps:
\begin{eqnarray}\label{eq:1}
\varphi_Q:\ \mathbb C[p_{I}: I\in \mathcal{O}(Q)]\rightarrow \mathbb{C}[z_1,\ldots,z_n, y_1,\ldots,y_n]\quad {\rm with}\quad p_I\mapsto u_I,
\end{eqnarray}
and 
\begin{eqnarray}\label{eq:2}
\varphi'_Q:\ \mathbb C[p_{I}: I\in \mathcal{O}(Q)]\rightarrow \mathbb{C}[z_1,\ldots,z_n]\quad {\rm with}\quad p_I\mapsto u'_I.
\end{eqnarray}
\begin{Definition}[Hibi ideal]
The {\em Hibi ideal} of $Q$ is defined as $L_Q={\rm Ker}(\varphi_Q)={\rm Ker}(\varphi'_Q)$.
\end{Definition}
The equality above 
is implicit in Hibi's seminal paper \cite{Hibi1}, see also~\cite[Theorem~1.1]{ene2011monomial}.

\begin{figure}
 \begin{center}
\begin{tikzpicture} [scale = .6]


  \node (0) at (15,1) {$y_1y_2y_3y_4y_5$};

  \node (3) at (15,3) {$z_3y_1y_2y_4y_5$};
  \node (34) at (17,5) {$z_3z_{4}y_1y_2y_5$};
  \node (23) at (13,5) {$z_2z_3y_1y_4y_5$};
  
  \node (234) at (15,7) {$z_2z_3z_4y_1y_5$};
  \node (123) at (11,7) {$z_1z_2z_3y_4y_5$};
  \node (345) at (19,7) {$z_3z_4z_5y_1y_2$};

  \node (1234) at (13,9) {$z_1z_2z_3z_4y_5$};
  \node (2345) at (17,9) {$z_2z_3z_4z_5y_1$};

  \node (12345) at (15,11) {$z_1z_2z_3z_4z_5$};

\draw[black](0)--(3);
\draw[black](3)--(34);
\draw[black](3)--(23);
\draw[black](23)--(234);
\draw[black](34)--(234);
\draw[black](34)--(345);
\draw[black](23)--(123);
\draw[black](123)--(1234);
\draw[black](234)--(1234);
\draw[black](234)--(2345);
\draw[black](345)--(2345);
\draw[black](1234)--(12345);
\draw[black](2345)--(12345);


  \node (0) at (3,1) {\textcolor{black}{$\emptyset$}};

  \node (3) at (3,3) {{$\bf 3$}};
  \node (34) at (5,5) {$3{\bf 4}$};
  \node (23) at (1,5) {${\bf 2}3$};
  
  \node (234) at (3,7) {$234$};
  \node (123) at (-1,7) {${\bf 1}23$};
  \node (345) at (7,7) {$34{\bf 5}$};

  \node (1234) at (1,9) {$1234$};
  \node (2345) at (5,9) {$2345$};

  \node (12345) at (3,11) {$12345$};

\draw[black](0)--(3);
\draw[black](3)--(34);
\draw[black](3)--(23);
\draw[black](23)--(234);
\draw[black](34)--(234);
\draw[black](34)--(345);
\draw[black](23)--(123);
\draw[black](123)--(1234);
\draw[black](234)--(1234);
\draw[black](234)--(2345);
\draw[black](345)--(2345);
\draw[black](1234)--(12345);
\draw[black](2345)--(12345);

\end{tikzpicture}\caption{(Left) The lattice $\ml$ of an LCI model. (Note that $\emptyset$ is always the unique minimum element of $\mathcal{L}$.) (Right) The monomials $u_I$ associated to each element $I$ of $\mathcal{L}$. Note that $z_i$ corresponds to the lattice point which is the smallest subset containing $i$ (marked in {\bf bold} in $\mathcal{L}$).}\label{fig:lattice}
 \end{center}
\end{figure}
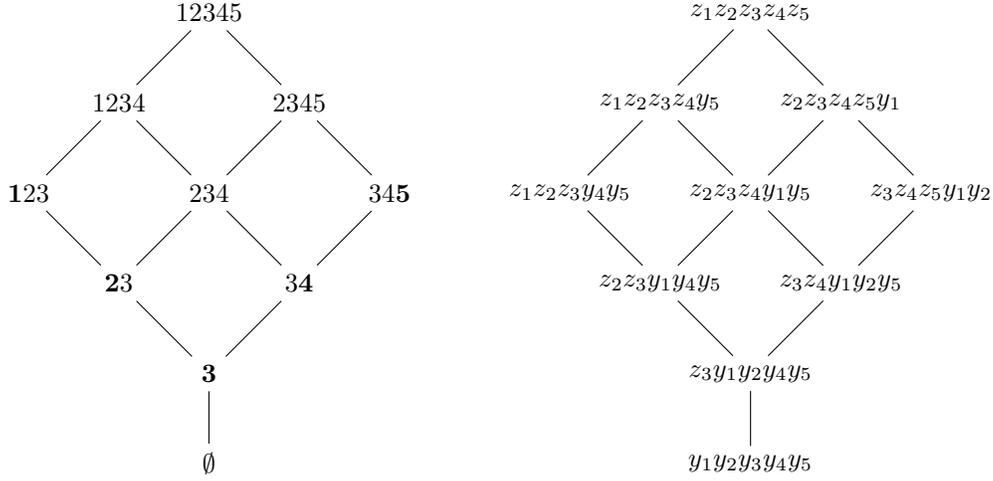

\begin{figure}
 \begin{center}
\begin{tikzpicture} [scale = .5]

  \node (3) at (15,3) {${\bf 3}$};
  \node (34) at (17,5) {$3{\bf 4}$};
  \node (23) at (13,5) {${\bf 2}3$};
  \node (123) at (11,7) {${\bf 1}23$};
  \node (345) at (19,7) {$34{\bf 5}$};

\draw[black](3)--(34);
\draw[black](3)--(23);
\draw[black](34)--(345);
\draw[black](23)--(123);


  \node (3) at (3,3) {$\bf 3$};
  \node (34) at (6,4) {${\bf 4}$};
 \node (23) at (0,4) {${\bf 2}$};
  \node (123) at (-1,7) {${\bf 1}$};
  \node (345) at (7,7) {${\bf 5}$};

\draw[->](3)--(123); 
\draw[->](3)--(345); 

\draw[->](3)--(34);
\draw[->](3)--(23);
\draw[->](34)--(345);
\draw[->](23)--(123);

\end{tikzpicture}\caption{(Left) The TDAG of the above LCI model. 
(Right) The joint irreducible poset $\ji$.
}\label{fig:TDAG}

\end{center}
\end{figure}
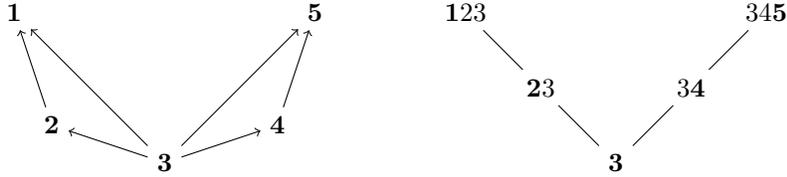


Before we explain LCI models in more detail, we recall the notion of join-irreducible elements of a lattice.
\begin{Definition}\label{def:join_irreducible}
Let $(\ml,\wedge,\vee)$ be a finite distributive lattice.
An element $j\in\ml$ is called \textit{join-irreducible} if $j=\ell_1\vee\ell_2$ for some $\ell_1,\ell_2\in \ml$ implies  $j=\ell_1$ or $j=\ell_2$. Denote by $\ji$ the set of join-irreducible elements in $\ml$. 
\end{Definition}

\begin{Example}
In examples, in order to simplify our notation we denote the set $\{i_1,i_2,\ldots, i_t\}$ by $i_1i_2\cdots i_t$, where $i_1 < i_2 < \cdots < i_t$. Figure~\ref{fig:lattice} depicts the lattice generated by the sets $123,234,345$ and the corresponding monomials $u_I$ for each lattice element $I$. The non-empty join-irreducible elements of $\mathcal{L}$ are $3,23, 34, 123$, and $345$. The corresponding join-irreducible post is depicted in Figure~\ref{fig:TDAG} (right). We have marked an element in bold, when it is the first time that it shows up in $\mathcal{L}$ (looking at $\mathcal{L}$ from bottom to top).
The corresponding TDAG is shown in left that will be further explained in Section~\ref{Sec:TDAG}.
\end{Example} 
\begin{Remark}\label{rem:g_I}
Note that, every element $I$ of $J(\mathcal{L})$ is uniquely determined by an element $i$ for which $I$ is the smallest lattice element containing it. Hence, we may use the notation $g_i$ for its corresponding monomial $u'_I=\prod_{j\leq i} z_j$. For example, Figure~\ref{fig:lattice} shows that $123$ is the smallest element of the lattice containing $1$, so the monomial $z_1z_2z_3$ is denoted by $g_1$. Similarly, we have $g_2=z_2z_3$, $g_3=z_3$, $g_4=z_3z_4$ and $g_{5}=z_3z_4z_5$.
\end{Remark}

Note that the partial order on $\ml$ induces a partial order on $\ji$. 
Hence, taking the poset $\ji$, we can 
define the Hibi ideal of $\ji$ or simply the Hibi ideal of the lattice $\mathcal{L}$. %
More precisely, Birkhoff's fundamental structure Theorem \cite[Theorem 3.4.1]{volume1986wadsworth} guarantees that the lattices $(\mathcal{O}(\ji),\cap,\cup)$ and $(\ml,\cap,\cup)$ are isomorphic. One fundamental result in this context is that the Hibi ideal of $\ml$ is generated by the following relations (see, e.g.,~\cite[Theorem 10.1.3]{herzog2011monomial}):
\[
p_Ip_J-p_{I\cap J}p_{I\cup J}\quad \text{with} \quad I\not\subseteq J\quad \text{and}\quad J\not\subseteq I\quad (I,J\in\mathcal{L}).
\]

We can now interpret the terms $p_I, p_J, p_{I \cup J},p_{I\cap J}$ as marginal probability distributions associated with their appropriate margins. The only bit of additional notation we need is the conditioning sign that we will develop in the next section. Thus the above binomials in the Hibi ideal give us the conditional independences in \eqref{CI}.

\section{Transitive directed acyclic graphs (TDAGs)}\label{Sec:TDAG}
\subsection{The probabilistic interpretation of the join irreducible elements 
}\label{sec:TDAG_L}

We start by giving the formal definition of the TDAG associated to a distributive lattice.

\begin{Definition}[TDAG of a lattice]\label{def:TDA_LG}
Let $\ml$ be a distributed lattice whose elements are corresponding to a collection of subsets of $[m]$. Assume that $\ml$ has $n$ join-irreducible elements. The TDAG associated with $\ml$ is a transitive directed acyclic graph $G([n],E)$ where the vertex $i$ represents a join-irreducible element of $\ml$ containing $i$ for the first time (ordering the join-irreducible elements from bottom to top) and a directed edge $e = \{i \rightarrow j\}$ belongs to $E$ if and only if the join-irreducible elements $i$ and $j$ are comparable in the lattice and $i<j$. 
\end{Definition}
\begin{Example}
The associated TDAG of the lattice $\mathcal{L}$ in Figure~\ref{fig:lattice} is depicted in Figure~\ref{fig:TDAG} (left).
\end{Example}

Figures~\ref{fig:lattice} and \ref{fig:TDAG} show the relations between the LCI lattice, the set of monomials associated with the lattice and the
corresponding TDAG of the lattice. 
Up to this point the development has been purely algebraic. 
Now we interpret the variables $p_J$, 
$z_i$ and $y_k$ in probabilistic terms.

\medskip

We can give a formal meaning, or interpretation, of the $ z_i$ as
$$z_i = p_{\{j: j \leq i \} | \{j: j < i\}},$$
where $j \leq i$ is in the lattice sense. The notation means that we condition $X_i$ on the ``ancestors" of $X_i$, with respect to the lattice.
For instance, let us consider the example of the set $123$ in the lattice $\mathcal{L}$ in Figure~\ref{fig:lattice}. This has an ascending chain of subsets leading to:
$$ 3 \subset 23 \subset 123,$$
and a  corresponding chain of monomials:
$$ z_3,\ z_2 z_3,\ z_1z_2z_3.$$  
The interpretation of the $z_i$ is as the terms in the development of marginal $p_{123}$ in terms of conditions. Thus,
$$p_{123} = p_3 p_{23|3} p_{123| 23},$$
and we make the mapping
$$z_3 \leftrightarrow p_3, \; z_2 \leftrightarrow  p_{23|3}, \; z_1 \leftrightarrow p_{123|23}.$$
Similarly, let us check $p_{234}$. Mapping to the $z_i$ we have
$$ z_2 z_3 z_4  \rightarrow \left(\frac{p_{23}}{p_3}\right)  p_3 \left( \frac{p_{34}}{p_3}\right) = \frac{p_{23} p_{34}}{p_3} = p_{234}.$$
Note that the last equality derives from the equation $p_{23}p_{34}=p_3 p_{234}$ arising from the corresponding Hibi ideal.
More generally, for every saturated chain of subsets 
\[
J_0\subset J_1\subset \cdots\subset J_r=J
\]
we have that
\begin{eqnarray*}
p_{J} & = & p_{J_0} p_{J_1|J_0} p_{J_2|J_1}\cdots  p_{J|J_{r-1}}\\
             & =  & z_{J_0} z_{J_1\backslash J_0} z_{J_2\backslash J_1} \cdots z_{J_r\backslash J_{r-1}} .
\end{eqnarray*}
We should note that in our development we increment subsets with a single ``$i$", but in general the increment can be larger. In those cases, we can extend the definition of the index set, in an obvious way.  We now state and prove the relationship between the marginal and conditional probabilities.

\begin{Theorem}
Consider a probability distribution with a set of positive marginals 
$p_J = \prod_{i\in J} z_{i}$ where the collection of sets $J$ forms a distributive lattice satisfying the Hibi relations. Then:
\begin{itemize}
    \item[{\rm(i)}] The quantities $z_i$ are conditional probabilities $z_i = p_{\{j: j \leq i \} | \{j: j < i\}}$.

\item[\rm{(ii)}] The Hibi relations give all conditional independent statements associated with the lattice.
\end{itemize}
\end{Theorem}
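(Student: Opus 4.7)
The plan is to derive both parts directly from the hypothesis $p_J=\prod_{i\in J}z_i$ together with Birkhoff's theorem and the equivalence (already established in \S\ref{Sec:Elem}) between the binomial relation $p_Ip_J=p_{I\cap J}p_{I\cup J}$ and the conditional independence statement \eqref{CI}.

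For part (i), the key structural observation is that for every join-irreducible element $i\in J(\mathcal{L})$, both sets $I_i:=\{j:j\leq i\}$ and $I_i\setminus\{i\}=\{j:j<i\}$ are order ideals of $J(\mathcal{L})$, hence by Birkhoff's theorem they are elements of $\mathcal{L}$. The hypothesis $p_J=\prod_{j\in J}z_j$ then gives
\[
\frac{p_{I_i}}{p_{I_i\setminus\{i\}}}=\frac{\prod_{j\leq i}z_j}{\prod_{j<i}z_j}=z_i,
\]
and the left-hand side is by definition the conditional probability $p_{\{j:j\leq i\}\mid\{j:j<i\}}$. Positivity of all marginals $p_J$ is what legitimises this division. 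For increments by more than one element along a saturated chain $J_0\subset J_1\subset\cdots\subset J_r=J$, the same telescoping argument yields the factorisation
\[
p_J=p_{J_0}\,p_{J_1\mid J_0}\,p_{J_2\mid J_1}\cdots p_{J\mid J_{r-1}},
\]
already previewed in the text.

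For part (ii), I would invoke Hibi's theorem (cited as \cite[Theorem 10.1.3]{herzog2011monomial} in the paper) which says that $L_Q$ is generated by the binomials $p_Ip_J-p_{I\cap J}p_{I\cup J}$ for incomparable pairs $I,J\in\mathcal{L}$. Under the probabilistic interpretation of the $p_I$ as marginals, each such generator becomes the equation $p_Ip_J=p_{I\cap J}p_{I\cup J}$, which (given positivity) is the content of \eqref{P_I} and is therefore equivalent, via the discussion around \eqref{sets}, to the conditional independence $X_{I\setminus J}\bigCI X_{J\setminus I}\mid X_{I\cap J}$. Comparable pairs $I\subseteq J$ yield only the trivial identity. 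Thus the generators of the Hibi ideal supply precisely one CI statement for each pairwise meet/join in $\mathcal{L}$, and these are by definition the CI statements associated with the lattice.

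The main obstacle is conceptual rather than computational: one must be careful that the sets on which conditional probabilities are defined (the ancestral sets $\{j:j\leq i\}$ and $\{j:j<i\}$) genuinely belong to $\mathcal{L}$, so that the product representation actually lets us read off a conditional probability. This is exactly where Birkhoff's correspondence $\mathcal{L}\cong\mathcal{O}(J(\mathcal{L}))$ is doing the real work; without it the telescoping in part (i) would not land in the given family of marginals. The algebraic equivalence in part (ii) is then essentially tautological given the earlier identification of \eqref{P_I} with \eqref{CI}.
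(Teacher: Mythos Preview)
Your proposal is correct and follows essentially the same route as the paper. For part (ii) the two arguments are identical: both read each generator $p_Ip_J-p_{I\cap J}p_{I\cup J}$ of the Hibi ideal as the CI statement \eqref{CI}. For part (i) the paper packages the same telescoping computation in the language of valuations, writing $\phi(I)=\log p_I$ so that \eqref{P_I} becomes $\phi(I\cup J)=\phi(I)+\phi(J)-\phi(I\cap J)$, and then expands $p_{I\cup J}$ along saturated chains from $I\cap J$ up to $I$ and to $J$; your direct quotient $z_i=p_{I_i}/p_{I_i\setminus\{i\}}$ is the same telescoping step isolated at a single covering relation. Your explicit invocation of Birkhoff to guarantee that $\{j:j\leq i\}$ and $\{j:j<i\}$ lie in $\mathcal{L}$ is a point the paper leaves implicit.
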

\begin{proof}
The conditions satisfied by the Hibi ideal can be rearranged
as
$$ p_{ I \cup J} = \frac{p_{I} p_J}{p_{I \cap J}}.$$
Now assume that we have a probability distribution with positive marginals supported on the lattice. Then the logarithm $\phi(I) = \log p_I$ induces a valuation
\begin{equation}\label{eq:phi_union}
    \phi(I\cup J) = \phi(I) + \phi(J) - \phi(I \cap J).
\end{equation}
We then invoke the theory of valuations on a distributive lattice and claim that the  conditional independence relations arise from M\"obius inversion on the lattice, essentially inclusion-exclusion formula for $\phi(I)$. More precisely, considering the saturated chains of subsets 
\[
I\cap J=J_0\subset J_1\subset \cdots\subset J_r=J\quad\text{and}\quad I\cap J=I_0\subset I_1\subset \cdots\subset I_s=I,
\]
the Equation
\eqref{eq:phi_union} translates to 
\begin{eqnarray*}
p_{I\cup J} & = & \frac{p_{I}  p_{J}}{p_{I\cap J}}\\
             &  = & \frac{p_{I}}{p_{I\cap J}} \frac{p_{J}}{p_{I\cap J}} p_{I\cap J}\\
             & =  &  \left(\frac{p_{I}}{p_{I_{s-1}}}\cdots \frac{p_{I_1}}{p_{I\cap J}} \right)
              \left(\frac{p_{J}}{p_{J_{s-1}}}\cdots \frac{p_{J_1}}{p_{I\cap J}} \right)
               p_{I\cap J}\\
             & =  & \prod_{i=1}^s z_{I_{i}\backslash I_{i-1}}\ \prod_{j=1}^r z_{J_{j}\backslash J_{j-1}} z_{I\cap J}.  \quad\quad\quad\quad\quad\quad\quad\quad\quad
             \mbox{\qedhere}
\end{eqnarray*}
\end{proof}
\subsubsection{The complementary lattice of $\mathcal{L}$ and the reverse TDAG}\label{sec:dual}
Given a lattice $L$ on a collection of subsets $\mathbb{I}$ of $[n]$, we denote its complementary lattice with $\mathcal{L}^*$ whose elements are $[n]\backslash I$ (for $I\in \mathbb{I}$) and are reversely ordered comparing to $\mathcal{L}$. 
In the exact same way that we have defined the entities for $\mathcal{L}$, we can associate the dual entities $q_J$ and the $y_i$ for the complementary lattice $\mathcal{L}$. If we want to think
of each arrow as a generalisation of time, we might say we condition on the ``descendants" of $i$.  
In the same way, the variable $y_i$ has an interpretation as the conditional random variable for the distribution of random variables $Y_i$ defined by the margins $q_J= \frac{p_{[n]}}{p_{[n] \setminus J}}$. Given the distribution defined by the $q_J$ margins:
$$y_i =q_{\{j: j \geq i\}\mid\{ j: j > i\}},$$
the question remains as what ideal do we obtain if we eliminate the $y_i$ from the ideal
$$\langle q_J-\prod_{i\in {J}}y_i:\ J\in \mathcal{O}(Q)\rangle.$$
This has an interesting probabilistic interpretation. In \S\ref{sec:Alex}, we will see that it relates to reversing the arrows in the associated TDAG of $\mathcal{L}$. 
We will also connect this graph operation with the Alexander duality of the corresponding ideal.

\medskip

Just as we associate a marginal random variable $X_I$ with every index set $I \in \mathbb I$, so we associate a random variable $Y_J$ with every $J$ in the complementary lattice $\mathcal{L}^*$ on the subsets $\mathbb J$. The random variable $Y_J$ is interpreted as having the conditional distribution of $X_J$ given $X_{[n] \setminus J}$. This has the distribution:
\begin{eqnarray}\label{eq:q_J}
q_J = \frac{p_{[n]}}{p_{[n] \setminus J}}.
\end{eqnarray}
The duality is clear: writing $I = [n] \setminus J$ we have
$$p_I q_J = p_{[n]}.$$
For the complementary lattice of Figure~\ref{fig:lattice}, it can be shown that $Y_{12}$ and $Y_{45}$ are independent, as
\begin{eqnarray*}
q_{12}q_{45} & = & \frac{p_{12345}}{p_{345}} \frac{p_{12345}}{p_{123}} \\
                      & = & \frac{p_{12345}^2}{p_{12345} p_3} \\
                      & = & \frac{p_{12345}}{p_3} \\
                       & = & q_{1245},
                      \end{eqnarray*}
giving the factorisation for the required independence.

The Gaussian case reflects the duality well. If one considers the projection operator $P_J$ associated with the Gaussian random variable $X_J$ then the Gaussian random variable associated with the random variable $Y_{[n] \setminus I}$ has the orthogonal projector
\begin{eqnarray}\label{eq:Complement}
Q_{[n] \setminus I} = I_n - P_I.
\end{eqnarray}
In our running example, we can see that
$$Q_{12} Q_{45} = (I-P_{345})(I-P_{123}) = I- P_{345} - P_{123} + P_{3}=0,$$
hence we recognise the conditional independence $Y_{12} \bigCI Y_{45} | Y_{3}$.

\subsection{Log-linear models for margins}
With a view to statistical modelling, we would like to point out that the $z$-representation leads to log-linear models, not only of the more familiar form for a  single probability $p(x)$ but also for all the margins. 
If for a margin for the index set $I$ we have the product form
$$p_I(x) = \prod_{i \in I} z_i(x),$$
and 
$$ \log p_I(x_I) = \sum_{i \in I} u_i(x_I),$$
where $u_i(x_I) = \log z_i(x_I)$, then this can be written in exponential form as
$$p_I(x) = \exp \left\{ \sum_{i \in I} u_i(x_I) \right\}.$$
Because of the dependence on $x_I$ this should be referred to as a {\em local} model. In statistical modelling, one may start with
a class of functions $\mathcal U = \{u(x_I)\}$, which defines the properties of our model and extend the class in a parametric way by introducing
parameters $\theta_{i,I}$ and writing
$$ \log p_I(x) = \exp \left\{ \sum_{i \in I} \theta_{i,I} u_i(x_I) \right \}.$$
Similar models can be written down for the $q$-margins.  We do not discuss statistical analysis in this paper, but we refer to 
\cite{massam1998estimation} for an overview in the Gaussian case.  Maximum likelihood estimation and other statistical
issues in the general case remain open, but we note the promising contribution from \cite{beerenwinkel2007conjunctive}. 

\subsection{Alexander duality and a special  bipartite graph} \label{sec:Alex}

We promised that the algebraic representation would yield certain results which may prove useful in the probability theory. Let $\mathcal{L}$ be a distributive lattice with join-irreducible poset $Q=J(\mathcal{L})$, see  Definition~\ref{def:join_irreducible}. The significance of Hibi ideals is that the Alexander dual of the monomial ideal $M_Q=\langle u_I:\ I\in \mathcal{O}(Q)\rangle$ can be interpreted as the edge ideal of a special bipartite graph. To be precise, for a poset $Q$ on $ 1,\ldots,n$, if we define the bipartite graph $G$ on the vertex set $\{z_1,\ldots,z_n,y_1,\ldots,y_n\}$ by saying that $\{z_i,y_j\}$ is an edge of $G$ if and only if $i\leq j$ in $Q$, then the Alexander dual of the ideal $M_Q$ is the edge ideal of $G$. 
It turned out that bipartite graphs obtained in this way are exactly the Cohen--Macaulay bipartite graphs \cite[Lemma~3.1]{Herzog}. 


We first recall the notion of Alexander duality for squarefree monomial ideals which naturally arise in this context (see \cite[Def. 5.20]{MS05}). 
\begin{Definition}[Alexander dual]
Let $K[{\bf x}]=K[x_1,\ldots,x_n]$ be a polynomial ring over a filed $K$ in $n$ variables.
  Let $M = \langle {\bf x}^{a_1},\ldots,{\bf x}^{a_r}\rangle$ be a squarefree monomial ideal in $K[{\bf x}]$, i.e.~it is generated by monomials ${\bf x}^{a_i}$ for $i=1,\ldots,r$, where every coordinate of $a_i=(a_{i,1},\ldots,a_{i,n})$ is either $0$ or $1$. The Alexander dual of $M$
is the monomial ideal $M^\ast =\underline{m}^{a_1}\cap\cdots\cap\underline{m}^{a_r}$, where 
$\underline{m}^{a_i}=\langle x_j:\ a_{i,j}=1\rangle$.
\end{Definition}

\begin{Definition}[Edge ideal]
Let $G$ be a directed graph on the vertices $1,\ldots,n$. Consider the polynomial ring $K[{\bf z,y}]=K[z_1,\ldots,z_n,y_1,\ldots,y_n]$ on $2n$ variables, where $z_i,y_i$ corresponds to the vertex $i$ of $G$. The edge ideal of $G$ is the ideal $I(G)=\langle z_iy_j:\ \{i\rightarrow j\}\ \text{is\ an\ edge\ of}\ G \rangle$.
\end{Definition}
\begin{Example}
Consider our running example depicted in Figures~\ref{fig:lattice} and \ref{fig:TDAG}. The ideal $M_Q$ is: 
$$M_Q=\langle y_1 y_2 y_3 y_4 y_5, z_3 y_1 y_2 y_4 y_5, z_2z_3 y_1 y_4 y_5, z_3z_4 y_1 y_2 y_5,  z_1z_2z_3 y_4 y_5,  z_2z_3 z_4y_1 y_5, 
$$
$$z_3z_4z_5 y_1 y_2, z_1z_2z_3 z_4 y_5,z_2 z_3 z_4 z_5 y_1, z_1 z_2 z_3 z_4 z_5 \rangle.$$
The Alexander dual of $M_Q$ is generated by the degree 2 monomials as follows:
$$M_Q^{\ast}=\langle z_3 y_2, z_3 y_1,z_2 y_1, z_3 y_4, z_3 y_5, z_4 y_5, \\  z_1 y_1, z_2 y_2, z_3y_3,z_4y_4,z_5y_5 \rangle,$$
which is the edge ideal of the TDAG in the left hand of Figure~\ref{fig:TDAG}, where we have removed the ``loops" corresponding to the monomials $z_i y_i$. 
\end{Example}
More formally, we have the following corollary \cite[Lemma~3.1]{Herzog} adopting our notation. 
\begin{Corollary}\label{cor:Herzog}
Let $\mathcal{L}$ be a distribute lattice with the joint irreducible poset $Q=J(\mathcal{L})$.
Then the Alexander dual of the ideal $M_Q=\langle u_I:\ I\in \mathcal{O}(Q)\rangle$ is the monomial ideal $M_Q^\ast$ generated by $z_iy_j$ where $i\leq j$ in $Q$. In particular, $M_Q^\ast$ is the edge ideal of the TDAG on the vertices $1,\ldots,m$ with the edge set $\{i\rightarrow j:\ i\leq j \ \text{in}\ Q\}$.
\end{Corollary}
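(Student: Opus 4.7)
The plan is to compute the Alexander dual $M_Q^\ast$ directly from its defining intersection of primes and then recognize the resulting monomial ideal as an edge ideal. Since $M_Q = \langle u_I : I \in \mathcal{O}(Q)\rangle$ with $u_I = \prod_{i\in I} z_i \prod_{j\notin I} y_j$, the definition of Alexander dual gives
\[
M_Q^\ast \;=\; \bigcap_{I\in\mathcal{O}(Q)} \mm_I, \quad \text{where} \quad \mm_I \;=\; \langle z_i : i\in I\rangle + \langle y_j : j\notin I\rangle.
\]
Each $\mm_I$ is a squarefree monomial prime, hence $M_Q^\ast$ is itself squarefree and is determined by which squarefree monomials it contains.

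First I would translate membership in $M_Q^\ast$ into a combinatorial condition on $Q$. For a squarefree monomial $m$, set $S_z=\{i : z_i\mid m\}$ and $S_y=\{j : y_j\mid m\}$. Then $m\in\mm_I$ iff $S_z\cap I\neq\emptyset$ or $S_y\setminus I\neq\emptyset$. Taking contrapositives over all $I\in\mathcal{O}(Q)$, we see $m\notin M_Q^\ast$ iff there is an order ideal $I$ with $S_y\subseteq I$ and $S_z\cap I=\emptyset$.

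The crux is a minimality argument on order ideals. Let $\langle S_y\rangle = \{k\in Q : k\leq j\text{ for some }j\in S_y\}$ denote the smallest order ideal containing $S_y$. Any order ideal containing $S_y$ must contain $\langle S_y\rangle$, so a witnessing $I$ exists iff $\langle S_y\rangle\cap S_z=\emptyset$; conversely $I=\langle S_y\rangle$ itself is such a witness whenever this holds. Unwinding, $m\in M_Q^\ast$ iff there exist $i\in S_z$ and $j\in S_y$ with $i\leq j$ in $Q$, equivalently $m$ is divisible by some $z_iy_j$ with $i\leq j$. Consequently
\[
M_Q^\ast \;=\; \langle z_iy_j : i\leq j \text{ in } Q\rangle.
\]

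The ``in particular'' statement is then immediate: by the definition of edge ideal given above, the right-hand side is the edge ideal of the directed graph whose edge set is exactly $\{i\to j : i\leq j\text{ in }Q\}$, which is the TDAG of Definition~\ref{def:TDA_LG} (the reflexive loops $i\to i$ contributing the $z_iy_i$ generators visible in the running example). The only nontrivial step is the minimality observation identifying $\langle S_y\rangle$ as the universal order ideal containing $S_y$; once that is granted, the remainder is routine bookkeeping with the Alexander dual formula.
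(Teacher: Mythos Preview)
Your argument is correct. The key step---that a squarefree monomial $m$ avoids $M_Q^\ast$ precisely when the order ideal $\langle S_y\rangle$ generated by its $y$-support misses its $z$-support---is exactly the right reduction, and the translation ``$\langle S_y\rangle\cap S_z\neq\emptyset$ iff some $z_iy_j$ with $i\le j$ divides $m$'' is immediate from the definition of the down-closure.

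By way of comparison: the paper does not actually prove this corollary at all; it simply quotes \cite[Lemma~3.1]{Herzog} and restates it in the present notation. Your proof is therefore more than the paper offers---it is a short, self-contained derivation directly from the Alexander-dual definition given a few lines earlier, with no appeal to outside literature. The only implicit convention you are relying on is that $\emptyset\in\mathcal{O}(Q)$ (so that $\langle S_y\rangle$ is always available as a witnessing order ideal, even when $S_y=\emptyset$); the paper's own examples and figures adopt this convention despite the formal definition saying ``non-empty,'' so you are consistent with the intended setup.
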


\medskip

This analysis gives a pleasing duality which we state in the following lemma.

\begin{Lemma}
Let $\mathcal{L}$ be a distribute lattice with the joint irreducible poset $Q=J(\mathcal{L})$. Let $I(i) = \{j: j \leq i\}$ and $J(j) = \{k: k \geq j\}$ for $i,j$ in $Q$. Following the notation of \S\ref{sec:dual}, let $p_I$ denote the probabilities associated to $\mathcal{L}$ with dual entities $q_J$ associated to the complementary lattice of $\mathcal{L}$. Then, for every $i\leq j$ we have that:
$$ \frac{p_{I(j)}}{p_{I(i)}} = \frac{q_{J(j)}}{q_{J(i)}}.$$
\end{Lemma}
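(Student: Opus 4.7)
The plan is to reduce the identity to a monomial equality via the Hibi parametrization. First, I would apply the definition of the dual entities from \eqref{eq:q_J}, $q_J = p_{[n]}/p_{[n]\setminus J}$, to rewrite the right-hand side as
\[
\frac{q_{J(j)}}{q_{J(i)}} \;=\; \frac{p_{[n]\setminus J(i)}}{p_{[n]\setminus J(j)}}.
\]
Both $[n]\setminus J(i)$ and $[n]\setminus J(j)$ are order ideals of $Q$, being the complements of the principal filters $J(i)$ and $J(j)$, and hence correspond to elements of $\mathcal{L}$ via Birkhoff's theorem; so the marginals on the right are well-defined.

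Next, I would invoke the Hibi parametrization from Remark~\ref{rem:g_I}: $p_L = \prod_{k\in L}z_k$ for every order ideal $L\subseteq Q$. Since $i\leq j$ in $Q$, we have the inclusions $I(i)\subseteq I(j)$ and $J(j)\subseteq J(i)$, so after the obvious cancellation the left-hand side collapses to $\prod_{k\in I(j)\setminus I(i)}z_k$ and the rewritten right-hand side to $\big(\prod_{k\in J(i)\setminus J(j)}z_k\big)^{-1}$. The claim thereby reduces to identifying these two multisets of $z$-indices through the combined action of the Hibi relations.

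The main obstacle is this combinatorial identification. My plan is to induct on the length of a saturated chain from $i$ to $j$ in $Q$. The base case (where $j$ covers $i$) should follow from a single instance of the Hibi generating relation $p_A p_B = p_{A\cap B}\,p_{A\cup B}$, applied to a pair $(A,B)$ chosen so that the newly added index in $I(\cdot)$ is matched to the newly removed index in $J(\cdot)$ under the Birkhoff correspondence; the inductive step would then compose these single-cover moves along the chain. Producing the correct pair $(A,B)$ at each cover, and keeping careful track of how the $z$-exponents accumulate on both sides, is where the delicate bookkeeping resides, and where I would expect most of the effort to go.
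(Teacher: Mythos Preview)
Your first two moves are carried out correctly: rewriting $q_{J(j)}/q_{J(i)}$ via \eqref{eq:q_J} as $p_{[n]\setminus J(i)}/p_{[n]\setminus J(j)}$, checking that both index sets are order ideals, and then applying the parametrization $p_L=\prod_{k\in L}z_k$. But the reduction you reach already shows that the inductive plan cannot succeed. You have
\[
\text{LHS}=\prod_{k\in I(j)\setminus I(i)}z_k,
\qquad
\text{RHS}=\Bigl(\prod_{k\in J(i)\setminus J(j)}z_k\Bigr)^{-1},
\]
so you are trying to equate a monomial with only positive $z$-exponents to one with only negative $z$-exponents. The Hibi relations cannot help here: once you substitute $p_L=\prod_{k\in L}z_k$, every relation $p_Ip_J=p_{I\cap J}p_{I\cup J}$ becomes a tautology among the $z$'s and supplies no further identity. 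There is therefore no ``single instance of the Hibi generating relation'' to invoke at a cover step, and even the base case of your induction fails. In the running example of Figures~\ref{fig:lattice}--\ref{fig:TDAG}, taking $i=3$ and $j=2$ gives $z_2$ on the left and $(z_3z_4z_5)^{-1}$ on the right.

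The paper's argument does not pass through the $z$-parametrization at all. It uses only the defining duality $p_I\,q_{[n]\setminus I}=p_{[n]}$ from \S\ref{sec:dual}, writing
\[
p_{I(i)}\,q_{J(j)}\;=\;p_{I(j)}\,q_{J(i)}\;=\;p_{[n]}
\]
and then dividing. That step amounts to taking $I(i)=[n]\setminus J(j)$ and $I(j)=[n]\setminus J(i)$, i.e.\ that the principal ideal below one endpoint and the principal filter above the other are complementary in $[n]$. Your monomial reduction actually makes this requirement completely explicit: the two sides can agree only when those complements line up, which is precisely the hypothesis the paper's one-line argument is using. So rather than searching for a chain-based induction, the useful takeaway from your computation is that it isolates the set-theoretic condition under which the displayed identity holds.
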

\begin{proof}
Let $G=G([n],E)$ be the corresponding TDAG of $\mathcal{L}$. By Corollary~\ref{cor:Herzog}, for every vertex $i$ of $G$, $\{j\rightarrow i\}$ is an edge if $j$ is in the set $I(i)$. 
Similarly, for the complementary lattice of $\mathcal{L}$, $\{k\rightarrow j\}$ is an edge in the corresponding TDAG if $k$ is in the set $J(j)$. In other words, the corresponding TDAG is the reverse of $G$, i.e.~its edges are obtained by reversing the edges of $G$. Then, note that for any member $p_I q_J$ for disjoint sets $I,J$ with $I \cup J = [n]$ we have:
$$ p_{I(i)} q_{J(j)} = p_{I(j)} q_{J(i)} = p_{[n]}.$$
Thus if $i \leq j$ we have:
$$ \frac{p_{I(j)}}{p_{I(i)}} = \frac{q_{J(j)}}{q_{J(i)}},$$
giving a symmetry in forward and backward conditional independence on the TDAG and the reverse TDAG. 
\end{proof}

As we have seen above, the Alexander dual framework unifies the forward and backward TDAGs analogously to its role in
the paths and cuts duality of algebraic approaches to system reliability \cite{giglio2004monomial, mohammadi2016algebraic}. Moreover, it provides us with various computational toolkits on Macaulay2 \cite{M2} such as the ``Posets.m2" package.

\begin{figure}
\begin{center}
\begin{tikzpicture} [scale = .6]
\node (11) at (1,7) {$X_{11}$};
\node (12) at (3,7) {$X_{12}$};
\node (13) at (5,7) {$X_{13}$};
\node (21) at (1,5) {$X_{21}$};
\node (22) at (3,5) {$X_{22}$};
\node (23) at (5,5) {$X_{23}$};
\node (31) at (1,3) {$X_{31}$};
\node (32) at (3,3) {$X_{32}$};
\node (33) at (5,3) {$X_{33}$};
\draw[black][->](11)--(12);
\draw[black][->](12)--(13);
\draw[black][->](21)--(22);
\draw[black][->](22)--(23);
\draw[black][->](31)--(32);
\draw[black][->](32)--(33);
\draw[black][->](21)--(12);
\draw[black][->](21)--(13);
\draw[black][->](21)--(32);
\draw[black][->](21)--(33);
\draw[black][->](22)--(13);
\draw[black][->](22)--(33);

\node (11) at (10,3) {${\bf 11}$};
\node (21) at (15,3) {${\bf 21}$};
\node (31) at (20,3) {${\bf 31}$};
\node (12) at (10,5) {$11,21,{\bf 12}$};
\node (22) at (15,5) {$21,{\bf 22}$};
\node (32) at (20,5) {$21,31,{\bf 32}$};
\node (13) at (10,7) {$11,21,12,22,{\bf 13}$};
\node (23) at (15,7) {$21,22,{\bf 23}$};
\node (33) at (20,7) {$21,31,22,32,{\bf 33}$};
\draw[black](11)--(12);
\draw[black](21)--(22);
\draw[black](31)--(32);
\draw[black](12)--(13);\draw[black](22)--(13);
\draw[black](22)--(23);
\draw[black](32)--(33);\draw[black](22)--(33);
\draw[black](21)--(32);\draw[black](21)--(12);


\end{tikzpicture}
\caption{(Left) Time series 
for $t=1,2,3$. To simplify the graph, the edges $X_{i1}\rightarrow X_{i3}$ (for $i=1,2,3$) are not depicted. (Right) The join-irreducible poset o the corresponding lattice. The variable $z_{ij}$ corresponds to the point which is the smallest subset containing $ij$ (marked in {\bf bold}).}
\label{fig:time}
\end{center}
\end{figure}
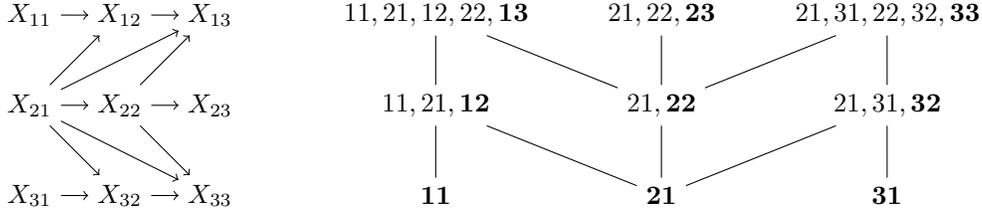

\section{Applications}

\subsection{Time series}
We now give a somewhat larger example showing how to apply the LCI principle to three univariate time series $\{X_{1t}, X_{2t}, X_{3t}\}$. We define a lattice conditional independence model
which 
describes the conditional independence between the first time series up to time $t$ and the third time series up to time $t$, given the second time up to time $t-1$. Moreover, in the general case this holds for all $t=1, 2, \ldots$. For $t= 1$, we may simply assume that $X_{11}$, $X_{21}$ and $X_{31}$ are independent (for simplicity we have omitted cross-sectional conditional independence). 


\begin{Example}\label{ex:time}
Figure~\ref{fig:time} (left) shows the TDAG scheme for three time points. We have simplified the labeling in the middle to avoid the double suffix. The corresponding lattice $\mathcal{L}$ has 45 elements indexed by the following subsets (ordered by inclusions):
\begin{itemize}
    \item[ ] $\scriptstyle{\emptyset, 
 \{{11}\}, 
 \{{21}\}, 
 \{{11}  , {21}\}, 
 \{{31}\}, 
 \{{11} , {31}\}, 
 \{{21} , {31}\}, 
 \{{11} , {21} , {31}\},
 \{{11} , {21} , {12}\}, 
 \{{11} , {21}  , {31}  , {12}\}, 
 \{{21} , {22}\}, }
 \{{11} , {21}  , {22}\}, 
 \{{21}  , {31}  , {22}\}, 
  $
\item[ ]  $\scriptstyle{\{{11}  , {21}  , {31}  , {22}\}, \{{11}  , {21}  , {12}  , {22}\}, 
 \{{11}  , {21}  , {31}  , {12}  , {22}\},
 \{{21}  , {31}  , {32}\}, 
 \{{11}  , {21}  , {31}  , {32}\}, 
 \{{11}  , {21}  , {31}  , {12}  , {32}\}, 
 \{{21}  , {31}  , {22}  , {32}\}, 
 } $
 \item[ ]
$\scriptstyle{
  \{{11}  , {21}  , {31}  , {22}  , {32}\}, 
 \{{11}  , {21}  , {31}  , {12}  , {22}  , {32}\},
 \{{11}  , {21}  , {12}  , {22}  , {13}\}, 
 \{{11}  , {21}  , {31}  , {12}  , {22}  , {13}\},
 \{{11}  , {21}  , {31}  , {12}  , {22}  , {32}  , {13}\}, 
 \{{21}  , {22}  , {23}\}, }
$

\item[ ] $\scriptstyle{
  \{{11}  , {21}  , {22}  , {23}\},  
 \{{21}  , {31}  , {22}  , {23}\}, 
 \{{11}  , {21}  , {31}  , {22}  , {23}\},
 \{{11}  , {21}  , {12}  , {22}  , {23}\}, 
 \{{11}  , {21}  , {31}  , {12}  , {22}  , {23}\}, 
 \{{21}  , {31}  , {22}  , {32}  , {23}\},
  } $
  
  \item[ ] $
 \scriptstyle{
  \{{11}  , {21}  , {31}  , {22}  , {32}  , {23}\}, 
 \{{11}  , {21}  , {31}  , {12}  , {22}  , {32}  , {23}\}, 
 \{{11}  , {21}  , {12}  , {22}  , {13}  , {23}\}, 
\{{11}  , {21}  , {31}  , {12}  , {22}  , {13}  , {23}\},
 \{{11}  , {21}  , {31}  , {12}  , {22}  , {32}  , {13}  , {23}\}, 
  }$
 \item[ ]
 $\scriptstyle{
 \{\{{21}  , {31}  , {22}  , {32}  , {33}\}, 
 \{{11}  , {21}  , {31}  , {22}  , {32}  , {33}\}, 
 \{{11}  , {21}  , {31}  , {12}  , {22}  , {32}  , {33}\},
 {11}  , {21}  , {31}  , {12}  , {22}  , {32}  , {13}  , {33}\}, 
 \{{21}  , {31}  , {22}  , {32}  , {23}  , {33}\}, 
}$

\item[ ]
$\scriptstyle{\{{11}  , {21}  , {31}  , {22}  , {32}  , {23}  , {33}\},  
 \{{11}  , {21}  , {31}  , {12}  , {22}  , {32}  , {23}  , {33}\}, 
 \{{11}  , {21}  , {31}  , {12}  , {22}  , {32}  , {13}  , {23}  , {33}\}}.$
\end{itemize}
Figure~\ref{fig:lattice_time_series} (left) shows the corresponding poset $Q=J(\mathcal{L})$ of join-irreducible elements. 
To verify that the TDAG is recaptured using the Alexander dual, for $Q$ in Figure~\ref{fig:lattice_time_series} (left), consider the ideal
$$M_Q=\langle y_{11} y_{21} y_{31} y_{12} y_{22} y_{32} y_{13} y_{23} y_{33}, z_{11} y_{21}y_{31}y_{12}y_{22}y_{32}y_{13}y_{23}y_{33},  
$$$$ z_{21}z_{31}z_{22}z_{32}z_{23}z_{33} y_{11}y_{12}y_{13},\ldots ,z_{11}z_{21}z_{31}z_{12}z_{12}z_{32}z_{13}z_{23}z_{33}\rangle$$
generated by 45 monomials associated to the subsets above. Then the Alexander dual of $M_Q$ is:
$$M_Q^\ast=\langle z_{11} y_{12}, z_{21} y_{12}, z_{21} y_{22},  z_{21} y_{32},  z_{31} y_{32},  z_{11} y_{13}, z_{21} y_{13}, z_{12} y_{13}, z_{22} y_{13}, z_{21} y_{23}, z_{22} y_{23}, z_{21} y_{33},$$ $$  z_{31} y_{33}, z_{32} y_{33},  z_{22} y_{33},  z_{11}y_{11},z_{21}y_{21},z_{31}y_{31},z_{12}y_{12},z_{22}y_{22},z_{32}y_{32},z_{13}y_{13},z_{23}y_{23},z_{33}y_{33} \rangle.$$
Now, following Corollary~\ref{cor:Herzog} we obtain the edge representation of the time series in Figure~\ref{fig:time} (left) by removing the ``loops" corresponding to the monomials $z_{ij} y_{ij}$ in $M_Q^\ast$.
\end{Example}

To preserve the conditional independence 
as time progresses, we need to show how to update the TDAG and the associated join-irreducible poset. If we add another time layer, then the new join-irreducible poset has one more layer with the index sets
$$\{11, 12, 13, 14, 21, 22, 23\}, \{21,22, 23, 24\},\{21, 22, 23, 31, 32, 33, 34\}.$$ 
These are the generators which appear at the top of the previous poset for $t=1,2,3$:
$$\{11, 12, 13, 21, 22\}, \{21, 22, 23\},\{21, 22, 31, 32, 33\}.$$
It should be clear, then, that the rule for updating the lattice is just to update the ancestral sets at each time point. 
But this also has a rule in terms of the $z_{ij}$. More precisely, we have that:
\begin{eqnarray*}
p_{\{11, 12, 13, 14, 21, 22, 23\}} & = & z_{23} z_{14} \cdot p_{\{11, 12, 13, 21, 22\}} \\
p_{\{21,22, 23, 24\}} &  = &  z_{24} \cdot p_{\{21, 22, 23\}}\\
p_{\{21, 22, 23, 31, 32, 33, 34\}} & = & z_{23} z_{34} \cdot p_{\{21, 22, 31, 32, 33\}}
\end{eqnarray*}
Thus the rule is to update the lattice generators with the conditional probabilities corresponding to the new arrows. In time series language, $z_{23},z_{14}, z_{24}, z_{34}$  can be considered as the innovations corresponding to the conditional independence structure or, to put it more strongly, the innovations required to {\em preserve} the conditional independence. All new lattice-based conditional independence statements are obtained by expanding with subsets, inclusions and intersections in the usual way. 

\subsubsection{Transfer functions}
The $z$-products, which we have seen are strings of conditional probabilities, facilitate the updating of time series models, by the use of transfer functions. We only give a sketch, here. We suggest that an appropriate approach is to use
the generators $g_i$ in the sense of Remark~\ref{rem:g_I}, where $i$ uniquely determines the subset $I$ which is the smallest join-irreducible element containing $i$). For example, as depicted in Figure~\ref{fig:time} (right), the smallest set containing $13$ is  $\{11, 21, 12, 22, 13\}$. Let us denote the corresponding monomial with $g_{13}$ as follows. 
Thus, the transfer functions can be based on updating the generators. These are products, or sets of products, but their logarithms give linear functions, creating a log-linear time series.

\medskip

More precisely, in our example in Figure~\ref{fig:time} 
the generators corresponding to the top-layer at time $t=3$ are given by
\begin{eqnarray*}
g_{13} & = & z_{11} z_{21}z_{12} z_{22} z_{13}, \\
g_{23} & = & z_{21} z_{22} z_{23}, \\
g_{33} & = & z_{21} z_{31} z_{22} z_{32} z_{33}. 
\end{eqnarray*}
Taking the same LCI structure advancing to $t=4$, as depicted in Figure~\ref{fig:lattice_time_series}, we multiple entrywise by a new vector, namely
\begin{eqnarray*}
{u_{1t}} & = &  z_{14}z_{23}, \\
{u_{2t}} & = & z_{24}, \\
{u_{3t}} & = & z_{24}z_{34}. 
\end{eqnarray*}
We can express the updating symbolically, with obvious notation, by
$${g_{i,t+1} = g_{it}  \circ u_{it},}$$
for every $i=1,\ldots,3$, where $\circ$ is the Schur (entrywise) product.
Taking logarithms we write:
$$\log( g_{i,t+1}) = \log(g_{it})  + \log( u_{it}).$$
Now, using the shift operator $S$ from time series we can write this as
$$S \cdot \log( g_{it}) = \log(g_{it})  + \log( u_{it}),$$
leading to 
$$ (S-1) \log(g_{it}) = + \log( u_{it}), $$
and formally
\begin{eqnarray*}
\log(g_{it}) & = & (S-1)^{-1} \log(u_{it}),\\
              & = & T(1-T)^{-1} \log(u_{it}),
\end{eqnarray*}
Here $T = S^{-1}$ is the reverse (backward in time) shift operator. Then, by expanding $$(1-T)^{-1} = 1 + T + T^2 + \cdots ,$$ and with suitable initial conditions, we can develop the process in terms of the ``innovations" $u_{it}$, noting that 
$T \log(u_{it}) = \log (u_{i,t-1}).$

\subsection{Information flow}
The Shannon information associated with the margin $I$ is
$$H(I) = \mbox{E} \{\log (p_I( X_I))\},$$
where the expectation $\mbox{E}$ is with respect to the full joint distribution (although we could compute the expectation with respect to the marginal random variable $X_I$).
Taking logarithms and then expectation of the Hibi relations (assuming positivity of the probabilities) we have
$$p_I(X)p_J(X)=p_{I\cap J}(X)p_{I\cup J}(X),$$
and we  see that $H$ is a {\em valuation} on our distributive lattice: for all pairs $I,J \in \mathcal{L}$ we have
$$ H(I \wedge J) + H(I \vee J) = H(I) + H(J). $$
The theory of valuations on distributive lattices is famously developed by Rota \cite{rota1964foundations,kung2009combinatorics}.
It is important to note that, whereas $\phi(I) = \log \phi_I$ coming from a probability evaluation in Theorem~\ref{thm:energy} depends on the argument $X_I$, that is a local evaluation, the expectation in the evaluation of Shannon information removes the random variable $X_I$, leaving only the index  $I$. Thus we may write $H(I)$ as a numerical quantity attached directly to the margin $I$.
For example, we can again use simple suffix notation. Thus, for simple conditional expectation we write:
$$H({123})  = H({23}) + H(34) - H(3).$$
Moreover, we have a TDAG representation of change in information. Returning to our running TDAG left branch $3 \subset 23 \subset 123$ from Figure~\ref{fig:lattice}, we have
\begin{eqnarray}
H(123) & = & H(3)  + \{H({23}) - H(3)\} + \{H({123})- H({23})\} .
\end{eqnarray} 
The three terms on the right hand side correspond respectively to the arrows in our TDAG: $3 \rightarrow 1, 3 \rightarrow 2, 2 \rightarrow 1$ (see Figure~\ref{fig:TDAG}). We see that the TDAG
describes a (cumulative) additive model for information. The transitive closure edge $3 \rightarrow 1$ simply contributes $H(3)$ to the total information at node $1$.  

The increments along each edge have an interpretation in terms of the Shannon result for disjoint index sets $I,J$:
$$H(I \cup J) = H(I) + \mbox{E}_I \{ H(J | I) \}.$$
Thus, in our example we can write
\begin{eqnarray}
H({123}) & = & H(3)  + E_3{H({2|3})} + E_{23}H({1|23}).
\end{eqnarray} 
This interpretation of the TDAG as representing an information flow diagram, holds generally for LCI models, and is reminiscent of various types of compartmental flow models, for fluids, or perhaps a better analogy is for energy, recalling that Shannon entropy is the negative of Shannon information. Essentially, it arises from the underlying Markov structure.

We express this result as an informally stated theorem, without proof.
\begin{Theorem}\label{thm:energy}
The TDAG of an LCI model provides an additive information model in which to each edge
$ (i \rightarrow j)$ is associated energy increment:
$H(I) - H(J)$ where $I = J \cup \{i\}$.      
\end{Theorem}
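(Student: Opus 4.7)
The plan is to derive the additive decomposition by combining two tools already established in the paper: the Hibi product factorization $p_I = \prod_{i \in I} z_i$ valid for every $I \in \mathcal{L}$, and the valuation identity $H(I \wedge J) + H(I \vee J) = H(I) + H(J)$, which follows by applying $\mathbb{E}[\log(\cdot)]$ to the Hibi relation $p_{I \cap J}\, p_{I \cup J} = p_I\, p_J$. First I would observe that along any saturated chain $\emptyset = J_0 \subset J_1 \subset \cdots \subset J_r = I$ in $\mathcal{L}$ with $|J_k \setminus J_{k-1}| = 1$, telescoping yields
\[
H(I) \;=\; \sum_{k=1}^{r} \bigl( H(J_k) - H(J_{k-1}) \bigr),
\]
and the valuation property guarantees that this sum is chain-independent, so each increment is an invariant of the underlying covering relation in $\mathcal{L}$.

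Next I would transport the decomposition to the TDAG via the correspondence of Definition~\ref{def:TDA_LG}, under which the vertex $j$ represents the join-irreducible ancestor set $I(j) = \{k : k \leq j\}$. Corollary~\ref{cor:Herzog} identifies covering relations in $J(\mathcal{L})$ with covering edges of the TDAG, so each such edge whose head is $j$ inherits the entropy increment $H(I(j)) - H(I(j) \setminus \{j\})$. For the conditional-entropy interpretation exhibited in the running $3 \subset 23 \subset 123$ example, I would apply the standard chain rule for entropy to the factorization $p_{J \cup \{j\}} = p_J \cdot p_{J \cup \{j\} \mid J}$, which agrees with the identification $z_j \leftrightarrow p_{J \cup \{j\} \mid J}$ from Section~\ref{sec:TDAG_L} and yields $H(J \cup \{j\}) - H(J) = \mathbb{E}_J[H(X_j \mid X_J)]$.

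Summing the covering-edge increments along any maximal chain from the minimum of $\mathcal{L}$ up to $I$ then produces the additive expression $H(I) = H(\emptyset) + \sum (\textrm{edge increments})$, and the same identity holds regardless of the chain chosen, again because $H$ is a valuation. The main subtlety I expect to have to handle is the treatment of transitive (non-covering) edges: rather than assigning them independent increments, the natural reading consistent with the running example is that a transitive edge $i \to j$ merely certifies that the contribution already attached to the covering chain from $i$ up to $j$ has been absorbed into $H(I(j))$, so that no double-counting occurs. Once this bookkeeping is made explicit, the theorem reduces to the telescoping identity along covering edges together with the entropy chain rule, with the combinatorial content supplied by Corollary~\ref{cor:Herzog}.
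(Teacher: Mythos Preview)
The paper explicitly states this theorem ``informally\ldots\ without proof,'' so there is no formal proof to compare against. Your proposal is a faithful formalization of the informal discussion that immediately precedes the theorem: the valuation identity for $H$ obtained by taking $\mathbb{E}[\log(\cdot)]$ of the Hibi relations, the telescoping along a saturated chain (illustrated in the paper by $H(123)=H(3)+\{H(23)-H(3)\}+\{H(123)-H(23)\}$), and the entropy chain rule $H(I\cup J)=H(I)+\mathbb{E}_I\{H(J\mid I)\}$ are exactly the three ingredients the paper sketches. Your invocation of chain-independence via the valuation property and of Corollary~\ref{cor:Herzog} to match covering relations with TDAG edges goes slightly beyond what the paper spells out, but is the natural way to make the argument rigorous; your treatment of transitive edges also agrees with the paper's remark that ``the transitive closure edge $3\rightarrow 1$ simply contributes $H(3)$.'' In short, your approach is the intended one, just carried out with more care than the paper itself provides.
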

As we mentioned in the discussion above Theorem~\ref{thm:energy}, one property of valuations on distributive lattices is the existence of inclusion-exclusion lemma.  This allows us to express the information for any union of index sets in the lattice in terms of alternating sums related to intersections. Again from our running example using the index sets $\{123,234,345\}$ we have:
\begin{eqnarray*}
H(12345) & = & H(123 \cup 234 \cup 345) \\
                  &  = & H(123) + H(234) + H(345) - H(23) - H(3) - H(34)  + H(3) \\
                 &  =  & H(123) + H(234) + H(345) - H(23) - H(34). 
 \end{eqnarray*}
Note that the complexity of the first inclusion-exclusion identity is simplified. The simplification here is an example of a property called the running intersection property, which we now explain.

\begin{Definition}
A sequence of index sets $I_1,\ldots, I_k$ has the {\em running intersection property} if for any
triple $I_i, I_j, I_k$ with $i < j < k$ we have
$I_i \cap I_k \subset I_j.$
\end{Definition} 
The result we have used is the following, which is known as Rota's inclusion-exclusion principle. See, e.g.,~\cite{klain1997introduction, knuth2006valuations}.
\begin{Lemma}
For a sequence of index sets $ \mathcal I = \{I_1, \ldots, I_k\}$ with the running intersection property and a valuation $\phi(I)$ on the distributed lattice generated by $\mathcal I$, we have the simplified inclusion-exclusion formula as:
$$ \phi\left( \cup_{i=1}^k I_i\right) = \sum_{i=1}^k \phi(I_i) - \sum_{i,j=1,\ i <j}^k \phi(I_i \cap I_j).$$
\end{Lemma}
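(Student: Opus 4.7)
The plan is to argue by induction on $k$, the length of the sequence, with the valuation identity $\phi(I \cup J) + \phi(I \cap J) = \phi(I) + \phi(J)$ (which was already observed for $\phi = \log p$ in Equation~\eqref{eq:phi_union} and holds for any valuation on a distributive lattice) as the only engine. Writing $U_m = I_1 \cup \cdots \cup I_m$ for each $m$, the base case $k = 2$ is literally this identity, so assume $k \geq 3$. Applying the valuation identity to the pair $(U_{k-1}, I_k)$ in the distributive lattice generated by $\mathcal{I}$ gives
$$\phi(U_k) = \phi(U_{k-1}) + \phi(I_k) - \phi(U_{k-1} \cap I_k).$$

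The crux of the argument is to simplify $U_{k-1} \cap I_k$ by means of the RIP. Distributing the intersection yields $U_{k-1} \cap I_k = \bigcup_{i=1}^{k-1}(I_i \cap I_k)$. For every $i < k-1$, the RIP applied to the triple $(i, k-1, k)$ gives $I_i \cap I_k \subseteq I_{k-1}$, and hence $I_i \cap I_k \subseteq I_{k-1} \cap I_k$. Since $I_{k-1} \cap I_k$ is itself one of the summands, the entire union collapses to $U_{k-1} \cap I_k = I_{k-1} \cap I_k$. Feeding this back gives the recurrence
$$\phi(U_k) = \phi(U_{k-1}) + \phi(I_k) - \phi(I_{k-1} \cap I_k),$$
and invoking the inductive hypothesis on the sub-sequence $I_1, \ldots, I_{k-1}$ (which inherits the RIP) completes the induction after collecting the new single-index term $\phi(I_k)$ and the new pairwise term $-\phi(I_{k-1} \cap I_k)$ into the appropriate sums.

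An alternative route is to expand the ordinary inclusion-exclusion formula on the distributive lattice and simplify each term via the RIP. The key observation here is that for any nonempty $S \subseteq \{1, \ldots, k\}$ with $a = \min S$ and $b = \max S$, iterated use of the RIP yields $\bigcap_{i \in S} I_i = I_a \cap I_b$, so every higher-order intersection collapses to a pairwise one determined by its endpoints. What remains is combinatorial bookkeeping: for each pair $(a,b)$ one sums the alternating signs $(-1)^{|S|+1}$ over all $S$ with endpoints $a$ and $b$, and a short computation then identifies which pairwise terms survive. I expect the inductive route to be the cleaner of the two; the only real obstacle is the (purely notational) step of turning the recurrence into the final closed-form sum, which is routine once the RIP-collapse above has been established.
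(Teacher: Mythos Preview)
The paper does not actually prove this lemma; it simply states it with a citation to the literature, so there is no in-paper argument to compare against.

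Your inductive argument is sound right up to the moment you ``collect terms,'' but at that step a discrepancy with the displayed formula appears. The recurrence you derive,
\[
\phi(U_k) = \phi(U_{k-1}) + \phi(I_k) - \phi(I_{k-1}\cap I_k),
\]
adds exactly one new pairwise term, namely $-\phi(I_{k-1}\cap I_k)$. Unrolling it therefore yields
\[
\phi\Bigl(\bigcup_{i=1}^k I_i\Bigr) \;=\; \sum_{i=1}^k \phi(I_i) \;-\; \sum_{i=1}^{k-1} \phi(I_i\cap I_{i+1}),
\]
with the second sum running only over \emph{consecutive} indices, not over all pairs $i<j$ as the lemma is phrased. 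Your alternative route makes the same point explicit: once one knows $\bigcap_{i\in S} I_i = I_{\min S}\cap I_{\max S}$, the alternating-sign count for a fixed pair $(a,b)$ is $-(1-1)^{b-a-1}$, which vanishes whenever $b-a\geq 2$ and equals $-1$ only for consecutive pairs. So both of your approaches prove the consecutive-pair identity, not the one on display.

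This is not a flaw in your reasoning but in the lemma's statement. The paper's own worked example immediately preceding the lemma confirms this: with $I_1=\{1,2,3\}$, $I_2=\{2,3,4\}$, $I_3=\{3,4,5\}$ one has $I_1\cap I_3=\{3\}$, and the paper obtains
\[
H(12345)=H(123)+H(234)+H(345)-H(23)-H(34),
\]
in which the non-consecutive term $-H(3)=-\phi(I_1\cap I_3)$ does \emph{not} appear; the all-pairs formula would subtract it once more. So the displayed formula is a misprint, and your induction is a correct proof of the intended consecutive-pair version. The only thing to fix in your write-up is the final sentence: the new pairwise term $-\phi(I_{k-1}\cap I_k)$ does not ``collect'' into $\sum_{i<j}\phi(I_i\cap I_j)$; it collects into $\sum_{i}\phi(I_i\cap I_{i+1})$.
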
 

It is worth noting that the fact that the generators of the LCI have the running intersection property shows that if we only consider the first order intersections, then we have a decomposable graphical model \cite{lauritzen1988local}. 
We see this here in that the LCI gives many more inclusion-exclusion formulas than the single one above. For the LCI model every information associated with a margin, $E_I$ is the sum of the information associated with each conditional probability given by a $z_i$ (for $i \in I$).

\section{Discussion}

This paper is a contribution to the area of algebraic statistics but one which is somewhat different from the usual log-linear models' approach which leads to a toric representation for the raw probabilities. There {\em is} a toric representation, but for the marginal distributions. It is based on the simple observation that the distributive lattice definition of Hibi ideals is identically providing an exact mapping to a distributive lattice-based probabilistic model, called an LCI model. These were originally introduced for Gaussian models where the lattice arises from a particular selection of margins, but the formula is now quite general. The TDAG dual representation of the LCI model also has a purely algebraic representation in terms of a special bipartite graph arising from Alexander duality, a theory discovered independently of the probabilistic representation. The toric representation derives as the elimination ideal generated by certain products of variables $z_i$ that can be themselves associated with conditional independences, and the $\log z_{i}$ give the terms in (i) appropriate log-linear models (ii) time series models and (iii) models for Shannon information. We suggest that this algebraic underpinning of LCI models will help make them more widely applicable.

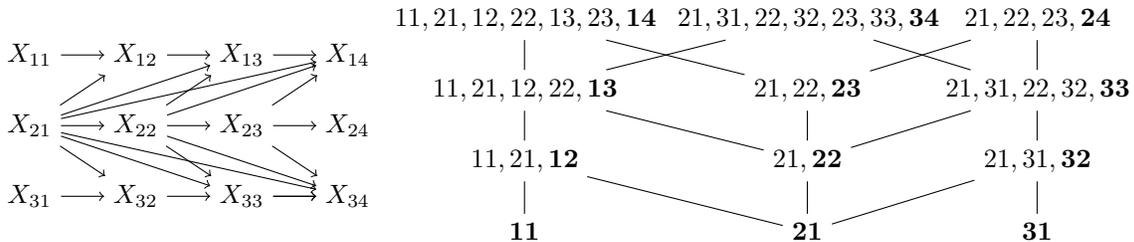
\begin{figure}
\begin{center}
\begin{tikzpicture} [scale = .47]

\node (11) at (4,8) {$X_{11}$};
\node (12) at (7,8) {$X_{12}$};
\node (13) at (10,8) {$X_{13}$};
\node (21) at (4,6) {$X_{21}$};
\node (22) at (7,6) {$X_{22}$};
\node (23) at (10,6) {$X_{23}$};
\node (31) at (4,4) {$X_{31}$};
\node (32) at (7,4) {$X_{32}$};
\node (33) at (10,4) {$X_{33}$};
\node (41) at (13,8) {$X_{14}$};
\node (42) at (13,6) {$X_{24}$};
\node (43) at (13,4) {$X_{34}$};
\draw[black][->](11)--(12);
\draw[black][->](12)--(13);
\draw[black][->](21)--(22);
\draw[black][->](22)--(23);
\draw[black][->](31)--(32);
\draw[black][->](32)--(33);
\draw[black][->](21)--(12);
\draw[black][->](21)--(13);
\draw[black][->](21)--(32);
\draw[black][->](21)--(33);
\draw[black][->](22)--(13);
\draw[black][->](22)--(33);

\draw[black][->](13)--(41);
\draw[black][->](23)--(42);
\draw[black][->](33)--(43);

\draw[black][->](22)--(41);
\draw[black][->](23)--(41);
\draw[black][->](21)--(41);

\draw[black][->](22)--(43);
\draw[black][->](23)--(43);
\draw[black][->](33)--(43);
\draw[black][->](21)--(43);


\node (11) at (18,3) {${\bf 11}$};
\node (21) at (26,3) {${\bf 21}$};
\node (31) at (32.5,3) {${\bf 31}$};
\node (12) at (18,5) {$11,21,{\bf 12}$};
\node (22) at (26,5) {$21,{\bf 22}$};
\node (32) at (32.5,5) {$21,31,{\bf 32}$};
\node (13) at (18,7) {$11,21,12,22,{\bf 13}$};
\node (23) at (26,7) {$21,22,{\bf 23}$};
\node (33) at (32.5,7) {$21,31,22,32,{\bf 33}$};
\node (1) at (18,9) {$11,21,12,22,13,23,{\bf 14}$};
\node (2) at (26,9) {$21,31,22,32,23,33,{\bf 34}$};
\node (3) at (32.5,9) {$21,22,23,{\bf 24}$};
\draw[black](11)--(12);
\draw[black](21)--(22);
\draw[black](31)--(32);
\draw[black](12)--(13);\draw[black](22)--(13);
\draw[black](22)--(23);
\draw[black](32)--(33);\draw[black](22)--(33);
\draw[black](21)--(32);\draw[black](21)--(12);

\draw[black](13)--(1);\draw[black](23)--(1);
\draw[black](13)--(2);\draw[black](33)--(2);
\draw[black](33)--(3);\draw[black](23)--(3);
\end{tikzpicture}
\caption{
(Left) Time series for $t=1,2,3,4$ while the curved edges 
$X_{ij}\rightarrow X_{ik}$ (for $i=1,2,3$ and $j< k+1$) are not depicted above. (right) The join-irreducible poset of the corresponding lattice. Note that $z_{ij}$ corresponds to the point which is the smallest subset containing $ij$ (marked in {\bf bold}.)
}\label{fig:lattice_time_series}
 \end{center}
\end{figure}

\medskip
\noindent
\textbf{Acknowledgments.} 
P.C. was partially supported by NSERC 05336-2019.
F.M. was partially supported by 
BOF/STA/201909/038, and FWO grants (G023721N, G0F5921N). E.S.C was supported by PID2020-116641GB-100 funded by MCIN/AEI/10.13039/501100011033.
H.W. was partially supported by project R-EDI-002 from Alan Turing Institute, London.


\bibliographystyle{abbrv.bst}
\bibliography{ref.bib}

\smallskip
\noindent
\footnotesize {\bf Authors' addresses:}

\bigskip 

\medskip

\noindent Department of Electrical and Computer Engineering, McGill University, Montreal, Canada
\\ E-mail address: {\tt peterc@cim.mcgill.ca}

\medskip

\noindent Department of Mathematics: Algebra and Geometry, Ghent University, 9000 Gent, Belgium \\
Department of Mathematics and Statistics,
UiT – The Arctic University of Norway, 9037 Troms\o, Norway
\\ E-mail address: {\tt fatemeh.mohammadi@ugent.be}

\medskip

\noindent Departamento de Matemáticas y Computación, Universidad de La Rioja, Logroño, Spain
\\ E-mail address: {\tt eduardo.saenz-de-cabezon@unirioja.es}

\medskip 

\noindent London School of Economics and Political Science, London WC2A 2AE, UK
\\ E-mail address: {\tt H.Wynn@lse.ac.uk}
\end{document}